\documentclass[oneside,british]{amsart}
\usepackage[T1]{fontenc}
\usepackage[utf8]{inputenc}
\usepackage{refstyle}
\usepackage{amstext}
\usepackage{amsthm}
\usepackage{amssymb}
\usepackage[authoryear]{natbib}

\makeatletter

\AtBeginDocument{\providecommand\lemref[1]{\ref{lem:#1}}}
\AtBeginDocument{\providecommand\thmref[1]{\ref{thm:#1}}}
\RS@ifundefined{subsecref}
  {\newref{subsec}{name = \RSsectxt}}
  {}
\RS@ifundefined{thmref}
  {\def\RSthmtxt{theorem~}\newref{thm}{name = \RSthmtxt}}
  {}
\RS@ifundefined{lemref}
  {\def\RSlemtxt{lemma~}\newref{lem}{name = \RSlemtxt}}
  {}

\numberwithin{equation}{section}
\numberwithin{figure}{section}
\theoremstyle{plain}
\newtheorem{thm}{\protect\theoremname}[section]
\theoremstyle{definition}
\newtheorem{defn}[thm]{\protect\definitionname}
\theoremstyle{plain}
\newtheorem{fact}[thm]{\protect\factname}
\theoremstyle{plain}
\newtheorem{lem}[thm]{\protect\lemmaname}
\theoremstyle{plain}
\newtheorem{cor}[thm]{\protect\corollaryname}
\theoremstyle{remark}
\newtheorem{rem}[thm]{\protect\remarkname}

\usepackage{braket}

\newcommand{\yes}{\mathrm{Yes}}
\newcommand{\no}{\mathrm{No}}
\newcommand{\abstain}{\mathrm{Abstain}}

\makeatother

\usepackage{babel}
\providecommand{\corollaryname}{Corollary}
\providecommand{\definitionname}{Definition}
\providecommand{\factname}{Fact}
\providecommand{\lemmaname}{Lemma}
\providecommand{\remarkname}{Remark}
\providecommand{\theoremname}{Theorem}

\begin{document}
\title{Hallucination, abstention, and computable inseparability}
\author{Takuma Imamura}
\address{Independent Researcher, AP-209, Kyoto Suzaku Studio, 2nd Floor, Kyoei
Building, 44 Sujaku Hozo-cho, Shimogyo-ku, Kyoto, 600-8846, Japan}
\email{imamura.takuma.66s@kyoto-u.jp}
\begin{abstract}
The impossibility of eliminating hallucination, understood here as
incorrect definite answers, in sufficiently expressive yes-or-no formal
domains is an immediate consequence of classical undecidability theorems.
This note does not revisit that forced-answer obstruction as its main
claim. Instead, it attempts to formally describe the corresponding
limitation for abstaining systems. Abstention can trivially avoid
hallucination if the system is allowed to abstain on every input;
the substantive question is how large the domain of guaranteed correct
non-abstaining answers can be. We formulate this question using separation
in the arithmetical hierarchy. Given disjoint sets $A$ and $B$,
any system that answers Yes on all queries indexed by $A$ and No
on all queries indexed by $B$ induces a separator of $A$ from $B$.
By combining this observation with the classical existence theorem
of $\Delta_{n}^{0}$--inseparable pairs of $\Sigma_{n}^{0}$--sets,
we yield a computability-theoretic trade-off between avoiding hallucination
by abstention and maintaining a large domain of guaranteed coverage.
\end{abstract}

\subjclass[2020]{03D80 (Primary); 68T27, 03D20, 03D25 (Secondary).}
\keywords{hallucination; abstention; computable inseparability; arithmetical
hierarchy; artificial intelligence.}
\maketitle

\section{Introduction}

Large language models (LLMs) are widely used as general-purpose question-answering
systems. A common failure mode is now called \emph{hallucination}:
the system gives a fluent and seemingly authoritative answer that
is false, unsupported, or unverifiable. The term is modern, but the
phenomenon has several logically distinct components. Some are empirical
and engineering-specific, involving training data, retrieval, calibration,
reinforcement learning from human feedback, user interfaces, benchmark
design, and deployment incentives. Others are worst-case mathematical
obstructions.

This note concerns only the latter kind of obstruction. We use the
term \emph{hallucination} in a deliberately narrow formal sense: an
incorrect definite answer to a yes-or-no query. This convention is
not intended to capture every use of the term in the LLM literature.
It isolates the part of the issue that can be stated purely in computability-theoretic
terms.

The forced-answer case is already standard. If a computable system
is required to answer every sufficiently expressive yes-or-no query
by either $\yes$ or $\no$, then some answer must be wrong. This
is just the usual undecidability argument, for example the halting
problem, going back to the classical work of \citet{Chu36,Tur37}.
Recent machine-learning papers have formulated inevitability or lower-bound
results for hallucination in different settings. For example, \citet{KV24}
prove a statistical lower bound for calibrated language models, while
\citet{XJK24} formulate an inevitability claim for computable LLMs
in a computable-world framework. These results should be distinguished
from the simpler computability-theoretic point used here.

The subject of this note is not the forced-answer impossibility itself.
The subject is the trade-off that remains after allowing \emph{abstention},
a mitigation for hallucinations. An abstaining system may answer ``I
do not know,'' ``undetermined,'' ``could not solve,'' or ``abstain''
instead of giving a definite yes-or-no answer. This idea has a long
history under names such as the reject option, selective classification,
and selective prediction. It has also become central in LLM reliability
research, where abstention is studied as a way to reduce hallucination
and improve safety. See e.g. \citet{WenEtAl25}.

Abstention trivially prevents hallucination if the system abstains
on every input. Thus the substantive question is a coverage question:
on how large a class of inputs can a hallucination-free system be
guaranteed to give correct non-abstaining answers? The answer this
note provides is an immediate consequence of computable inseparability.
If the required answered domain contains both sides of a computably
inseparable pair of c.e. sets, then the system's Yes-answer set must
itself be non-computable. Such a system cannot be computable. Thus
no new theorem of computability theory is claimed here. The main aim
is to formulate the computability-theoretic trade-off between avoiding
hallucination by abstention and maintaining a large domain of guaranteed
coverage as a separation problem. This is meant to clarify the relation
between a computability-theoretic obstruction and contemporary discussions
of hallucination and abstention in artificial intelligence.

For the basic definitions and results on computability theory and
meta-mathematics, we refer to \citet{Rog87,Odi92,HP98}.

\section{Formal model of hallucination and abstention}

Fix a finite alphabet $\Sigma$ and let $\Sigma^{\ast}$ be the set
of finite strings over $\Sigma$. An AI system is idealised extensionally
as a total finite-output function. Without abstention, the output
set is
\[
\set{\yes,\no}.
\]
With abstention, the output set is
\[
\set{\yes,\no,\abstain}.
\]
The internal architecture of the system is irrelevant for the arguments
below. It may be neural, symbolic, statistical, rule-based, or hybrid;
only its input-output behaviour is used.

Let $A\subseteq\mathbb{N}$ be a definable set, and let
\[
q_{A}\colon\mathbb{N}\to\Sigma^{\ast}
\]
be a computable function that encodes the query:
\[
q_{A}\left(x\right)\colon\quad\text{Is \ensuremath{x\in A}?}
\]
For example, if $A$ is defined by an arithmetical formula $\varphi\left(x\right)$,
i.e,
\[
A=\set{x\in\mathbb{N}|\mathbb{N}\models\varphi\left(x\right)},
\]
then $q_{X}\left(x\right)$ may be the string asking whether $\varphi\left(x\right)$
holds.
\begin{defn}[Formal hallucination]
Let 
\[
F\colon\Sigma^{\ast}\to\set{\yes,\no,\abstain}
\]
be a total function (representing the extensional input--output relation
of an AI system). On the query family $q_{A}$, a \emph{hallucination}
occurs at $x\in\mathbb{N}$ if
\[
F\left(q_{A}\left(x\right)\right)=\begin{cases}
\no, & x\in A,\\
\yes, & x\notin A.
\end{cases}
\]
Note that the output $\abstain$ is not counted as a hallucination.
\end{defn}

\begin{defn}[Soundness and coverage (restricted completeness)]
The system $F$ is said to be \emph{sound} for $A$ on the query
family $q_{A}$ if it has no hallucinations on that family, that is,
\[
F\left(q_{A}\left(x\right)\right)=\yes\implies x\in A,
\]
and
\[
F\left(q_{A}\left(x\right)\right)=\no\implies x\notin A.
\]
For $D\subseteq\mathbb{N}$, the system $F$ is said to \emph{cover}
$D$ if
\[
F\left(q_{A}\left(D\right)\right)\subseteq\set{\yes,\no}.
\]
Thus, if $F$ is sound for $A$ and covers $D$, then it gives a correct
definite answer to every $q_{A}$--query indexed by $D$.
\end{defn}

The following (trivial) fact is the forced-answer case. It is noted
only as background.
\begin{fact}
Let $A\subseteq\mathbb{N}$ be a non-computable, definable set. Every
computable function
\[
F\colon\Sigma^{\ast}\to\set{\yes,\no}
\]
is unsound for $A$ on the query family $q_{A}$.
\end{fact}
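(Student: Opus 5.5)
The plan is to argue by contraposition: I will assume that some computable total function $F\colon\Sigma^{\ast}\to\set{\yes,\no}$ is sound for $A$ on the query family $q_{A}$, and derive that $A$ is computable. This contradicts the hypothesis that $A$ is non-computable.

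First I will use soundness to pin down the answer set exactly. Since $F$ never outputs $\abstain$, for each $x\in\mathbb{N}$ we have $F\left(q_{A}\left(x\right)\right)\in\set{\yes,\no}$.
\begin{itemize}
\item If $F\left(q_{A}\left(x\right)\right)=\yes$, soundness gives $x\in A$.
\item If $F\left(q_{A}\left(x\right)\right)=\no$, soundness gives $x\notin A$.
\end{itemize}
Since exactly one of the two outputs occurs, this yields
\[
A=\set{x\in\mathbb{N}|F\left(q_{A}\left(x\right)\right)=\yes}.
\]

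Second, I will observe that the right-hand side is a computable set. The map $x\mapsto F\left(q_{A}\left(x\right)\right)$ is a composition of two computable total functions, so it is computable and total. Its outputs lie in a finite set, so checking equality with $\yes$ is computable. The characteristic function of $A$ is therefore computable, which contradicts the choice of $A$.

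There is no real obstacle here. The only point needing care is the modelling convention that $F$ being computable means computable on strings, together with the fact that $q_{A}$ is computable by assumption. Once these are fixed, the composition argument goes through, and the rest is the standard reduction of a decision procedure to a correct total answerer.
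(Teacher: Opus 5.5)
Your argument is correct and is the standard one the paper has in mind. The paper gives no written proof: it calls the fact trivial and points to the usual undecidability argument. That argument is exactly yours: soundness together with the absence of $\abstain$ forces $A=q_{A}^{-1}(F^{-1}(\yes))$, and this set is computable because $F\circ q_{A}$ is a computable total function with finite range. Equivalently, it is the paper's induced-separator lemma applied with $B=\mathbb{N}\setminus A$, where the only possible separator is $A$ itself.
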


The rest of the note concerns the case in which $\abstain$ is available
as an explicit output.

\section{Abstention, coverage, and inseparability}

We use the standard convention for the arithmetical hierarchy: let
$\Delta_{n}^{0}$ be the class of $0^{\left(n-1\right)}$--computable
sets, $\Sigma_{n}^{0}$ the class of $0^{\left(n-1\right)}$--computably
enumerable (c.e.) sets, $\Pi_{n}^{0}$ the class of $0^{\left(n-1\right)}$--co-c.e.
sets, where $0^{\left(n-1\right)}$ denotes the $\left(n-1\right)$--th
Turing jump.
\begin{defn}
Let $A,B\subseteq\mathbb{N}$ be disjoint sets. A set $S\subseteq\mathbb{N}$
is said to \emph{separate} $A$ from $B$ if
\[
A\subseteq S\subseteq\mathbb{N}\setminus B.
\]
For $n\geq1$, a pair $\left(A,B\right)$ of disjoint sets is called
\emph{$\Delta_{n}^{0}$--inseparable} if no $\Delta_{n}^{0}$--set
separates $A$ and $B$.
\end{defn}

\begin{fact}[cf. \citet{Odi92}]
For every $n\geq1$, there exists a pair $\left(A,B\right)$ of disjoint
$\Sigma_{n}^{0}$--sets that is not $\Delta_{n}^{0}$--separable.
(Note that the $\Pi_{n}^{0}$--set $\mathbb{N}\setminus B$ separates
$A$ and $B$.)
\end{fact}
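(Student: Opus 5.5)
The plan is to relativize the classical construction of a computably inseparable pair of c.e. sets to the oracle $0^{(n-1)}$. For $n=1$ I take the standard pair
\[
A=\set{e\in\mathbb{N}|\varphi_{e}\left(e\right)\downarrow=0},\qquad B=\set{e\in\mathbb{N}|\varphi_{e}\left(e\right)\downarrow=1},
\]
where $\varphi_{e}$ is the $e$-th partial computable function. For general $n$ I replace $\varphi_{e}$ by $\varphi_{e}^{0^{(n-1)}}$, the $e$-th partial $0^{(n-1)}$-computable function, and define $A_{n}$ and $B_{n}$ by the same formulas. These are clearly disjoint, since $\varphi_{e}^{0^{(n-1)}}(e)$ has at most one value.

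The first step is to check that $A_{n}$ and $B_{n}$ are $\Sigma_{n}^{0}$ in the sense fixed in the paper. Membership of $e$ in $A_{n}$ holds iff there exists a stage $s$ at which the oracle computation $\varphi_{e,s}^{0^{(n-1)}}(e)$ halts with output $0$. That condition is $0^{(n-1)}$-decidable for each $s$, so $A_{n}$ is $0^{(n-1)}$-c.e., and the same argument works for $B_{n}$. This step only needs the usual use principle for oracle computations.

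The second step is the diagonal argument. Suppose, for contradiction, that some $\Delta_{n}^{0}$-set $S$ separates $A_{n}$ from $B_{n}$. Then $S$ is $0^{(n-1)}$-computable, so the function $g$ defined by $g(x)=1$ if $x\in S$ and $g(x)=0$ otherwise is total and $0^{(n-1)}$-computable. Hence $g=\varphi_{e_{0}}^{0^{(n-1)}}$ for some index $e_{0}$. There are two cases:
\begin{itemize}
\item If $e_{0}\in S$, then $\varphi_{e_{0}}^{0^{(n-1)}}(e_{0})=1$, so $e_{0}\in B_{n}\subseteq\mathbb{N}\setminus S$, which is a contradiction.
\item If $e_{0}\notin S$, then the value is $0$, so $e_{0}\in A_{n}\subseteq S$, again a contradiction.
\end{itemize}
Therefore no $\Delta_{n}^{0}$-set separates the pair.

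Finally, the parenthetical remark is immediate: $\mathbb{N}\setminus B_{n}$ is $\Pi_{n}^{0}$ because it is the complement of a $\Sigma_{n}^{0}$ set, and it separates the pair because $A_{n}\cap B_{n}=\emptyset$.

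I expect no genuine obstacle. The only point requiring care is the relativization bookkeeping: one must confirm that the enumeration theorem (the existence of a universal indexing $\varphi_{e}^{X}$) and the stage approximation relativize uniformly to the oracle $X=0^{(n-1)}$. This is standard, and I would cite it from \citet{Odi92} rather than reprove it.
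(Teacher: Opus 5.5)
Your proof is correct. The pair $A_n=\{e:\varphi_e^{0^{(n-1)}}(e)\downarrow=0\}$, $B_n=\{e:\varphi_e^{0^{(n-1)}}(e)\downarrow=1\}$ is $\Sigma_n^0$ by the stage approximation, and your two-case diagonalization against an index for $g$ excludes every $\Delta_n^0$ separator. The paper gives no proof of this fact and only cites \citet{Odi92}; your relativization of the standard computably inseparable pair is the argument that citation points to.
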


The elementary observation behind the note is the following.
\begin{lem}[Abstaining system induces a separator]
\label{lem:induced-separator}Let $A,B\subseteq\mathbb{N}$ be disjoint
sets, $q\colon\mathbb{N}\to\Sigma^{\ast}$ a computable function,
and
\[
F\colon\Sigma^{\ast}\to\set{\yes,\no,\abstain}
\]
be a total function. Suppose that
\[
F\left(q\left(x\right)\right):=\begin{cases}
\yes, & x\in A,\\
\no,' & x\in B.
\end{cases}
\]
Then $S_{F}:=q^{-1}\left(F^{-1}\left(\yes\right)\right)$ separates
$A$ from $B$.
\end{lem}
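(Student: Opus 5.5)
The plan is to unfold the definition of separation and verify the two inclusions $A\subseteq S_{F}$ and $S_{F}\subseteq\mathbb{N}\setminus B$ directly. The hypothesis is read as saying that $F(q(x))=\yes$ for every $x\in A$ and $F(q(x))=\no$ for every $x\in B$; nothing is assumed about $x\notin A\cup B$, where $F$ may output anything, including $\abstain$. Since $F$ is total, each $x$ gets exactly one value $F(q(x))\in\set{\yes,\no,\abstain}$. So the set
\[
S_{F}=q^{-1}\left(F^{-1}\left(\yes\right)\right)=\set{x\in\mathbb{N}|F\left(q\left(x\right)\right)=\yes}
\]
is well defined as a subset of $\mathbb{N}$.

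For the first inclusion, take $x\in A$. By hypothesis $F(q(x))=\yes$, so $q(x)\in F^{-1}(\yes)$ and hence $x\in S_{F}$. For the second inclusion, take $x\in S_{F}$, so $F(q(x))=\yes$. If $x$ were in $B$, the hypothesis would give $F(q(x))=\no$. This is impossible because $F$ is a function and $\yes\neq\no$. Hence $x\notin B$, that is, $S_{F}\subseteq\mathbb{N}\setminus B$. Disjointness of $A$ and $B$ is used only to make the hypothesis consistent, so that no $x$ is required to receive both answers.

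There is no real obstacle in this argument. The only point needing care is the interpretation of the displayed hypothesis: it is a partial specification on $A\cup B$, not a definition of $F$ everywhere, and its stray prime on $\no$ is a typo. I would also add a remark for later use, since it is not needed for the lemma itself. If $F$ is computable, then $S_{F}$ is computable, because $q$ is computable and membership $x\in S_{F}$ is decided by evaluating $F(q(x))$. More generally, if $F$ is $0^{(n-1)}$--computable, then $S_{F}$ is $\Delta_{n}^{0}$. This remark is what, combined with the Fact on $\Delta_{n}^{0}$--inseparable pairs, yields the intended trade-off.
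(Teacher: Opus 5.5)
Your proof is correct and is the paper's argument: you check $A\subseteq S_{F}$ directly from the hypothesis, and you get $S_{F}\subseteq\mathbb{N}\setminus B$ because $F(q(x))=\yes\neq\no$. Reading the hypothesis as a specification on $A\cup B$ only, with the prime as a typo, is the right interpretation. Your added remark that $S_{F}$ is $\Delta_{n}^{0}$ when $F$ and $q$ are $0^{(n-1)}$--computable is precisely what the paper uses afterwards in the theorem.
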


\begin{proof}
If $x\in A$, then $F\left(q\left(x\right)\right)=\yes$, so $x\in q^{-1}\left(F^{-1}\left(\yes\right)\right)=S_{F}$.
On the other hand, if $x\in S_{F}$, then $F\left(q\left(x\right)\right)=\yes\neq\no$,
so $x\in\mathbb{N}\setminus B$. Hence
\[
A\subseteq S_{F}\subseteq\mathbb{N}\setminus B.\qedhere
\]
\end{proof}
Combining this observation with the computable inseparability theorem
gives the following coverage limitation for abstaining systems.
\begin{thm}[Coverage trade-off (total case)]
\label{thm:coverage-trade-off-total}Let $\left(A,B\right)$ be a$\Delta_{n}^{0}$--inseparable
pair $\left(A,B\right)$ of $\Sigma_{n}^{0}$--sets, $S$ a $\Pi_{n}^{0}$--separator
of $A$ from $B$, and $q_{S}$ a $0^{\left(n-1\right)}$--computable
encoding of the query ``Is $x\in S$?'', where $n\geq1$. There is
no $0^{\left(n-1\right)}$--computable function
\[
F\colon\Sigma^{\ast}\to\set{\yes,\no,\abstain}
\]
such that
\begin{description}
\item [{Soundness}] $F$ is sound for $S$ on $q_{S}$;
\item [{Coverage}] $F$ covers $A\cup B$.
\end{description}
\end{thm}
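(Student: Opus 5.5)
The plan is a proof by contradiction that routes through Lemma~\ref{lem:induced-separator}. Suppose $F$ is $0^{(n-1)}$--computable, sound for $S$ on $q_S$, and covers $A\cup B$. The first step is to show that $F$ meets the hypothesis of the lemma with $q=q_S$. That is, we need
\[
F\left(q_S\left(x\right)\right)=\yes \text{ for } x\in A, \qquad F\left(q_S\left(x\right)\right)=\no \text{ for } x\in B.
\]

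For $x\in A$, coverage gives $F(q_S(x))\in\set{\yes,\no}$. Since $A\subseteq S$, soundness excludes $\no$, because $\no$ would force $x\notin S$. Hence the answer is $\yes$.

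For $x\in B$, coverage again gives a definite answer. Since $S\subseteq\mathbb{N}\setminus B$, we have $x\notin S$, so soundness excludes $\yes$. Hence the answer is $\no$.

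Lemma~\ref{lem:induced-separator} then tells us that $S_F=q_S^{-1}(F^{-1}(\yes))$ separates $A$ from $B$.

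The second step is a complexity computation for $S_F$. Its characteristic function is
\[
x\mapsto [F(q_S(x))=\yes].
\]
This is a composition of two $0^{(n-1)}$--computable functions, $q_S$ and $F$, followed by a computable test of whether the finite output equals $\yes$. So $S_F$ is $0^{(n-1)}$--computable, which means it is a $\Delta_n^0$--set. This contradicts the $\Delta_n^0$--inseparability of $(A,B)$, and the theorem follows.

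No step is a real obstacle; the argument is routine. The only points that need care are bookkeeping.

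First, I must use $F$ being total, so that membership in $S_F$ is decided rather than merely enumerated. Totality is built into the statement.

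Second, I must check that composing oracle computations relative to the same oracle $0^{(n-1)}$ stays $0^{(n-1)}$--computable. This is standard.

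Third, the $\Pi_n^0$ hypothesis on $S$ is not actually needed. Only $A\subseteq S\subseteq\mathbb{N}\setminus B$ is used, so I will remark that the theorem holds for any separator $S$.
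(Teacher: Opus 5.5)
Your proposal is correct and follows the same route as the paper. You apply the induced-separator lemma with $q=q_S$ to obtain $S_F=q_S^{-1}(F^{-1}(\mathrm{Yes}))$, then note that $S_F$ would be $\Delta^0_n$ if $F$ were $0^{(n-1)}$--computable, contradicting inseparability.

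You also supply two steps the paper leaves implicit: deriving the lemma's hypothesis (Yes on $A$, No on $B$) from soundness, coverage and $A\subseteq S\subseteq\mathbb{N}\setminus B$, and the composition argument for the complexity of $S_F$. Your remark that the $\Pi^0_n$ assumption on $S$ is never used is accurate.
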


\begin{proof}
Let $F\colon\Sigma^{\ast}\to\set{\yes,\no,\abstain}$ be a function
that satisfies both the soundness and the coverage conditions. By
\lemref{induced-separator}, the set $S_{F}:=q_{S}^{-1}\left(F^{-1}\left(\yes\right)\right)$
separates $A$ from $B$. Since $A$ and $B$ are $\Delta_{n}^{0}$--inseparable,
$S_{F}$ is non-$\Delta_{n}^{0}$, and therefore $F$ is not $0^{\left(n-1\right)}$--computable.
\end{proof}
For $n=1$, this says that no (total) computable hallucination-free
abstaining system can cover both sides of a computably inseparable
pair of c.e. sets.

Abstention is treated here as an explicit observable output. We do
distinguish abstention from non-termination. If non-termination is
admitted as a separate behaviour, then one obtains a different partial-computation
problem. For instance, for disjoint c.e. sets $A$ and $B$, a computable
procedure can enumerate their elements simultaneously, output $\yes$
on elements that enter $A$, output $\no$ on elements that enter
$B$, and diverge elsewhere. However, this changes does not remove
the obstruction.
\begin{cor}[Coverage trade-off (partial case)]
Let $\left(A,B\right)$ be a$\Delta_{n}^{0}$--inseparable pair
$\left(A,B\right)$ of $\Sigma_{n}^{0}$--sets, $S$ a $\Pi_{n}^{0}$--separator
of $A$ from $B$, and $q_{S}$ a $0^{\left(n-1\right)}$--computable
encoding of the query ``Is $x\in S$?'', where $n\geq2$. There is
no $0^{\left(n-2\right)}$--computable partial function
\[
F\colon\Sigma^{\ast}\rightharpoonup\set{\yes,\no,\abstain}
\]
such that
\begin{description}
\item [{Soundness}] $F$ is sound for $S$ on $q_{S}$;
\item [{Coverage}] $F$ covers $A\cup B$.
\end{description}
\end{cor}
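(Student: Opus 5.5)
Given a $0^{\left(n-2\right)}$--computable partial function $F$ that is sound for $S$ on $q_{S}$ and covers $A\cup B$, I will build from it a total $0^{\left(n-1\right)}$--computable function $\tilde{F}$ with the same two properties. This contradicts \thmref{coverage-trade-off-total}. The reduction rests on a single fact: one jump above the oracle used by $F$ can decide whether $F$ converges on a given input.

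\textbf{Step 1 (completing $F$).} Define $\tilde{F}\colon\Sigma^{\ast}\to\set{\yes,\no,\abstain}$ by setting $\tilde{F}\left(s\right)=F\left(s\right)$ if $F\left(s\right)\downarrow$, and $\tilde{F}\left(s\right)=\abstain$ otherwise. Since $F$ is partial computable relative to $0^{\left(n-2\right)}$, the set $\set{s|F\left(s\right)\downarrow}$ is $\Sigma_{1}^{0}$ relative to $0^{\left(n-2\right)}$. Hence it is computable from the jump $\left(0^{\left(n-2\right)}\right)'=0^{\left(n-1\right)}$. A $0^{\left(n-1\right)}$--oracle machine can therefore first ask whether $F\left(s\right)$ halts. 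If it does, the machine simulates $F$ (which uses only the weaker oracle $0^{\left(n-2\right)}\leq_{T}0^{\left(n-1\right)}$) and outputs the result. If it does not, the machine outputs $\abstain$. So $\tilde{F}$ is total and $0^{\left(n-1\right)}$--computable. The assumption $n\geq2$ is needed here so that $0^{\left(n-2\right)}$ makes sense.

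\textbf{Step 2 (properties transfer).} Soundness: $\tilde{F}$ outputs $\yes$ or $\no$ only where $F$ converges with that same value, so $\tilde{F}$ produces no hallucination on $q_{S}$. Coverage: I read ``$F$ covers $A\cup B$'' for a partial $F$ as requiring that, for every $x\in A\cup B$, $F\left(q_{S}\left(x\right)\right)$ converges to a value in $\set{\yes,\no}$. On such inputs $\tilde{F}$ agrees with $F$, so $\tilde{F}$ also covers $A\cup B$.

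\textbf{Step 3 (conclusion).} By \thmref{coverage-trade-off-total}, with the same $A$, $B$, $S$, $q_{S}$ and $n$, no total $0^{\left(n-1\right)}$--computable function is both sound and covering. This contradicts Step 2, so no such $F$ exists. Alternatively, one can apply \lemref{induced-separator} directly to $\tilde{F}$. Soundness and coverage force $\tilde{F}\left(q_{S}\left(x\right)\right)=\yes$ on $A\subseteq S$ and $=\no$ on $B\subseteq\mathbb{N}\setminus S$. So $S_{\tilde{F}}$ is a $\Delta_{n}^{0}$ separator of $A$ from $B$, which is impossible.

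The only step needing care is Step 1. It requires the relativised fact that the halting set of a $0^{\left(n-2\right)}$--oracle machine is computable in $0^{\left(n-1\right)}$, together with the reading of ``cover'' for partial functions as including convergence. Everything else is bookkeeping.
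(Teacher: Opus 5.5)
Your proposal is correct and is essentially the paper's own proof: complete $F$ to a total $\tilde{F}$ that outputs $\abstain$ wherever $F$ diverges, observe that $\tilde{F}$ is $0^{(n-1)}$--computable because halting of a $0^{(n-2)}$--oracle computation is decidable in $0^{(n-1)}$, and then invoke the total case (equivalently, the induced-separator lemma). Your explicit reading of ``covers'' as requiring convergence on $q_{S}(A\cup B)$ is exactly what the paper's proof uses tacitly, and it is genuinely needed, since otherwise the nowhere-defined function would be trivially sound and covering.
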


\begin{proof}
Let $F\colon\Sigma^{\ast}\rightharpoonup\set{\yes,\no,\abstain}$
be a partial function that satisfies both the soundness and the coverage
conditions. Suppose that $F$ is $0^{\left(n-2\right)}$--computable.
The total function defined by
\[
\tilde{F}\left(x\right):=\begin{cases}
F\left(x\right), & F\left(x\right)\downarrow,\\
\abstain, & F\left(x\right)\uparrow,
\end{cases}
\]
is $0^{\left(n-1\right)}$--computable, sound for $S$ on $q_{S}$,
and covers $A\cup B$, contradicting with \lemref{induced-separator}.
\end{proof}
For $n=2$, this implies that no partial computable hallucination-free
abstaining system can cover both sides of a $\Delta_{2}^{0}$--inseparable
pair of $\Sigma_{2}^{0}$--sets.

\section{Formal theories as examples}

Let $T$ be a consistent computably axiomatisable theory to which
the G\"odel--Rosser incompleteness applies (see e.g. \citet{Ros36,Pos44,Smu58}).
Define
\begin{align*}
A_{T} & :=\set{\left\lceil \varphi\right\rceil |T\vdash\varphi},\\
B_{T} & :=\set{\left\lceil \varphi\right\rceil |T\vdash\neg\varphi}.
\end{align*}
The set $A_{T}$ is the set of theorems of $T$, and $B_{T}$ is the
set of refutable sentences (anti-theorems) of $T$. Consistency makes
them disjoint. Computable axiomatisability makes them $\Sigma_{1}^{0}$.
The pair $\left(A_{T},B_{T}\right)$ is computably inseparable (see
\citet{Smu58}).

By \thmref{coverage-trade-off-total}, there is no pair $\left(q,F\right)$
of computable functions $q\colon\mathbb{N}\to\Sigma^{\ast}$ and $F\colon\Sigma^{\ast}\to\set{\yes,\no,\abstain}$
such that
\[
F\left(q\left(x\right)\right)=\begin{cases}
\yes, & x\in A_{T},\\
\no, & x\in B_{T}.
\end{cases}
\]

\begin{rem}
If $T$ is decidable, then both $A_{T}$ and $B_{T}$ are $\Delta_{1}^{0}$.
The inseparability obstruction above does not apply. Presburger arithmetic,
the fragment of the true arithmetic with only the addition symbol,
is a standard decidable theory (\citet{Pre29}). The first-order theory
of real closed fields is another classical decidable theory (\citet{Tar51}).
In such cases, there is, in principle, an algorithm that answers all
$T$--settled queries correctly. This theoretical possibility says
nothing about whether a deployed LLM will do so.
\end{rem}

\section{Conclusion}

In sufficiently expressive undecidable domains, any computable system
forced to give a definite answer to every query must sometimes be
wrong. This is a direct consequence of classical computability theory.
Abstention changes the obstruction into a coverage trade-off. A system
can avoid hallucination by abstaining, but computable inseparability
shows that the set of guaranteed correct non-abstaining answers may
have to be sharply restricted. If correctness is required on both
sides of a $\Delta_{n}^{0}$--inseparable pair, then a $0^{\left(n-1\right)}$--computable
finite-response system cannot provide such coverage while remaining
hallucination-free.

The preceding result is a worst-case theorem about computable input-output
functions. It does not estimate empirical hallucination rates, evaluate
calibration, design uncertainty estimators, compare abstention policies,
or analyse user incentives. Those questions are specific to machine
learning and human-computer interaction, and they have independent
scientific and practical value.

For example, abstention and selective-prediction research studies
how to trade coverage against empirical risk, how to estimate uncertainty,
and how to decide when a model should refuse to answer. In the large-language-model
setting, this includes work on truthfulness benchmarks such as TruthfulQA
\citet{LHE22}, abstention methods and evaluations surveyed by \citet{WenEtAl25},
and abstention training such as \citet{ZhoEtAl26}. Such work studies
practical mechanisms for reducing confident falsehoods in deployed
systems.

A separate practical issue is that training and evaluation procedures
may reward confident guessing rather than calibrated uncertainty.
\citet{KNVZ25} argue that common evaluation practices can incentivise
guessing over acknowledging uncertainty. Sycophancy studies also suggest
that human preference data can favour agreeable or convincingly written
answers over truthful ones in some settings \citet{SharmaEtAl24}.
Similarly, \citet{NDA24} reports experiments in which many users
prefer confident falsehoods or unmarked falsehoods to explicit admissions
of lack of knowledge. These results concern a different layer of the
problem: even where computability-theoretic obstructions are not the
binding constraint, empirical feedback loops may still amplify hallucination-like
behaviour.

These empirical studies are distinct from the computability-theoretic
impossibility proved above. Computability theory explains why perfect
coverage is impossible in worst-case formal domains. The machine-learning
literature studies how often hallucinations occur in deployed systems,
which training objectives and benchmarks amplify or mitigate them,
and how interfaces can encourage calibrated abstention rather than
confident falsehood.

\bibliographystyle{IEEEtranSN}
\bibliography{refs}

\end{document}